\newtheorem{thm}{Theorem}
\newtheorem{clm}{Claim}
\newtheorem{lem}{Lemma}
\newtheorem{obs}{Observation}
\begin{document}
\title{\Large \bf d-Regular Graphs of Acyclic Chromatic Index at least d+2}

\author{Manu Basavaraju\thanks{Computer Science and Automation department,
Indian Institute of Science,
Bangalore- 560012,
India.  {\tt manu@csa.iisc.ernet.in}} \and L. Sunil Chandran\thanks{ {\bf (The Corresponding
Author).} Computer Science and Automation department,
Indian Institute of Science,
Bangalore- 560012,
India.  {\tt sunil@csa.iisc.ernet.in}} \and Manoj Kummini\thanks{Department of Mathematics,
 University of Kansas,
 1460 Jayhawk Blvd., Rm Snow 405
 Lawrence, KS 66045-7523. USA.  {\tt kummini@math.ku.edu}}
}

\date{}
\pagestyle{plain}
\maketitle

\begin{abstract}

An $acyclic$ edge coloring of a graph is a proper edge coloring such that there are no bichromatic cycles. The \emph{acyclic chromatic index} of a graph is the minimum number k such that there is an acyclic edge coloring using k colors and is denoted by $a'(G)$. It was conjectured by Alon, Sudakov and Zaks
 (and earlier by Fiamcik) that $a'(G)\le \Delta+2$, where $\Delta =\Delta(G)$ denotes
 the maximum degree of the graph. 
Alon et.al also raised  the question whether the  complete graphs of even order are the only regular graphs which require $\Delta+2$ colors to be acyclically edge colored. In this paper, using a simple counting argument we observe not only that this is not true, but infact all d-regular graphs with $2n$ vertices and $d > n$, requires at least $d+2$ colors. We also show that $a'(K_{n,n}) \ge n+2$, when $n$ is odd using a more non-trivial argument(Here $K_{n,n}$ denotes the complete bipartite graph with $n$ vertices on each side). This lower bound for $K_{n,n}$ can be shown to be tight for some families of complete bipartite graphs and for small values of $n$.
We also infer that for every $d,n$ such that $d \ge 5$, $n \ge 2d + 3$ and $dn$  even, there exist
$d$-regular graphs which require at least $d+2$-colors to be acyclically edge colored. 
\end{abstract}

\noindent \textbf{Keywords:} Acyclic edge coloring, acyclic edge chromatic index, matching, perfect 1-factorization, complete bipartite graphs.

~~~~~~

All graphs considered in this paper are finite and simple. A proper \emph{edge coloring} of $G=(V,E)$ is a map $c: E\rightarrow C$ (where $C$ is the set of available $colors$) with $c(e) \neq c(f)$ for any adjacent edges $e$,$f$. The minimum number of colors needed to properly color the edges of $G$, is called the chromatic index of $G$ and is denoted by $\chi'(G)$. A proper edge coloring c is called acyclic if there are no bichromatic cycles in the graph. In other words an edge coloring is acyclic if the union of any two color classes induces a set of paths (i.e., linear forest) in $G$. The \emph{acyclic edge chromatic number} (also called \emph{acyclic chromatic index}), denoted by $a'(G)$, is the minimum number of colors required to acyclically edge color $G$. The concept of \emph{acyclic coloring} of a graph was introduced by Gr\"unbaum \cite{Grun}. Let $\Delta=\Delta(G)$ denote the maximum degree of a vertex in graph $G$. By Vizing's theorem, we have $\Delta \le \chi'(G) \le \Delta +1 $(see \cite{Diest} for proof). Since any acyclic edge coloring is also proper, we have $a'(G)\ge\chi'(G)\ge\Delta$.

It has been conjectured by Alon, Sudakov and Zaks \cite{ASZ} that $a'(G)\le\Delta+2$ for any $G$.
We were informed by Alon that the same conjecture was raised earlier by Fiamcik  \cite {Fiamcik}.
 Using probabilistic arguments Alon, McDiarmid and Reed \cite{AMR} proved that $a'(G)\le60\Delta$. The best known result up to now for arbitrary graph, is by Molloy and Reed  \cite{MolReed} who showed that $a'(G)\le16\Delta$.

The complete graph on n vertices is denoted by $K_n$ and the complete bipartite graph with n vertices on each side is denoted by $K_{n,n}$. We denote the sides of the bi-partition by $A$ and $B$. Thus $V(K_{n,n}) = A \cup B$.

~~~~~~
%

\noindent \textbf{Our Result: }Alon, Sudakov and Zaks \cite{ASZ} suggested a possibility that complete graphs of even order are the only regular graphs which require $\Delta+2$ colors to be acyclically edge colored. Ne\v set\v ril and Wormald \cite{NesWorm} supported the statement by showing that the acyclic edge chromatic number of a random d-regular graph is asymptotically almost surely equal to $d+1$ (when $d \ge 2$). In this paper, we show that this is not true in general. More specifically we prove the following Theorems :

\begin{thm}
\label{thm:thm1}
Let $G$ be a d-regular graph with $2n$ vertices and $d >n$, then $a'(G) \ge d+2 = \Delta(G)+2$.
\end{thm}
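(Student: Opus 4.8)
The plan is to argue by contradiction. Since $G$ is $d$-regular we have $\Delta(G)=d$ and $a'(G)\ge\chi'(G)\ge d$, so it suffices to rule out acyclic edge colorings using $d$ or $d+1$ colors. First I would record two elementary facts. Counting incidences gives $|E(G)| = dn$, and every color class is a matching, hence has at most $n$ edges. The second, more important, observation is structural: if two distinct color classes $M_i$ and $M_j$ are both \emph{perfect} matchings, then in $M_i \cup M_j$ every vertex has degree exactly two, so this union is a disjoint collection of cycles, each alternating between colors $i$ and $j$ — that is, a bichromatic cycle. Hence in any acyclic edge coloring at most one color class can be a perfect matching.

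With these in hand, the case of $d$ colors is immediate: a proper coloring of a $d$-regular graph with $d$ colors forces every vertex to see all $d$ colors, so every color class is a perfect matching, giving $d \ge 2$ perfect matchings and therefore a bichromatic cycle.

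For the main case of $d+1$ colors, the idea is to count unsaturated vertices. Because each vertex has degree $d$ and exactly $d+1$ colors are available, at every vertex precisely one color is absent; assigning to each vertex its missing color partitions $V(G)$ into classes $U_1, \dots, U_{d+1}$, where $U_i$ is the set of vertices not saturated by the matching $M_i$. Then $|U_i| = 2n - 2|M_i|$, so each $|U_i|$ is even, and summing over all colors yields $\sum_i |U_i| = 2n$. A color class fails to be a perfect matching exactly when $|U_i| > 0$, i.e.\ when $|U_i| \ge 2$; therefore at most $n$ of the classes are non-perfect, and at least $(d+1) - n$ of them are perfect matchings. The hypothesis $d > n$ gives $(d+1) - n \ge 2$, producing two perfect matchings and hence a bichromatic cycle — the desired contradiction.

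The main obstacle, conceptually, is locating where the hypothesis $d > n$ must enter; the remainder is bookkeeping. The key realization is that $d > n$ is exactly the threshold forcing at least two color classes to be perfect matchings, which then collides with the acyclicity obstruction. I would also double-check the degenerate possibilities in the structural step — that $M_i \cup M_j$ cannot collapse into isolated vertices or single edges (this is ruled out since both matchings are perfect, making the union $2$-regular and nonempty) — but I do not expect any genuine difficulty there.
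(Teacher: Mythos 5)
Your proof is correct and takes essentially the same route as the paper: both hinge on the observation that two color classes which are perfect matchings would union to a $2$-regular bichromatic subgraph, so at most one class can be a perfect matching, and $d>n$ then yields a counting contradiction. The only cosmetic difference is that you count unsaturated vertices ($\sum_i |U_i| = 2n$, forcing at least $(d+1)-n \ge 2$ perfect classes) while the paper counts edges ($n + d(n-1) < dn$ when $d>n$) --- the same inequality in dual form.
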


\begin{thm}
\label{thm:thm2}
For any $d$ and $n$ such that $dn$ is even and $d \ge 5, n \ge 2d + 3$, there exists  a connected 
$d$-regular graphs that require $d+2$ colors to be acyclically edge colored.
\end{thm}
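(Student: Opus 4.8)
The plan is to build the required $d$-regular graphs by taking a graph whose acyclic chromatic index we can control and ``gluing'' it to a structure that forces the $d+2$ lower bound via Theorem~\ref{thm:thm1}. The key leverage is that Theorem~\ref{thm:thm1} already hands us, for every even $2m$, a family of $d$-regular graphs on $2m$ vertices with $d>m$ that require at least $d+2$ colors; the difficulty is purely that Theorem~\ref{thm:thm1} only applies in the dense regime $d>m$, whereas Theorem~\ref{thm:thm2} asks for graphs in the sparse regime where the vertex count $n \ge 2d+3$ greatly exceeds twice the degree. So the whole construction is about manufacturing a sparse $d$-regular graph that nonetheless \emph{contains}, as a subgraph on which the coloring is essentially forced, a dense piece that triggers the counting obstruction.

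First I would fix a small dense gadget: a $d$-regular graph $H$ on $2m$ vertices with $d>m$ (so $m<d<2m$, i.e. $m$ around $d/2+1$), which by Theorem~\ref{thm:thm1} has $a'(H)\ge d+2$. The existence of such $H$ for the relevant parities is the base case and should follow from standard regular-graph existence (e.g. circulant constructions), using $dn$ even to guarantee a $d$-regular graph on the required number of vertices exists at all. Second, I would attach to $H$ a ``padding'' component that raises the total vertex count to the prescribed $n \ge 2d+3$ while keeping the graph $d$-regular and connected, and — this is the crucial design constraint — without creating any new acyclic edge coloring with fewer than $d+2$ colors. Concretely one wants the edges of $H$ to form a subgraph through which any proper acyclic coloring restricted to $H$ must already violate the counting inequality of Theorem~\ref{thm:thm1}, so that the padding cannot rescue a $(d+1)$-coloring. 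Third, I would verify connectivity and the exact vertex count $n$ by choosing the padding's size to absorb the slack $n-2m$, which the hypotheses $n\ge 2d+3$ and $d\ge 5$ give enough room to do.

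The hard part will be Step two: ensuring the padding does not lower the chromatic index. The counting argument behind Theorem~\ref{thm:thm1} is a \emph{global} statement about a $d$-regular graph on $2n$ vertices with $d>n$; one cannot simply embed the dense gadget $H$ as a subgraph of a sparse graph and expect the same inequality to hold, because the inequality exploits that \emph{every} vertex has degree exceeding half the vertex count. Thus the real content is to find a self-contained dense block and glue it to the rest of the graph across a small, carefully chosen edge cut, so that in any acyclic edge coloring the colors appearing on the block are forced into the same pigeonhole collision that Theorem~\ref{thm:thm1} produces. I expect this to require either (a) taking a disjoint union of a dense Theorem~\ref{thm:thm1} graph with other $d$-regular components and arguing $a'$ of a disconnected graph is the max over components, then separately upgrading to a connected example by a degree-preserving edge-swap that links the components without destroying the forced collision, or (b) a direct circulant/Cayley construction on $n$ vertices whose local structure around some vertex set replicates the $d>m$ density. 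Approach (a) is cleaner: $a'$ of a disjoint union equals the maximum of $a'$ over components, so a union of one dense $\ge(d+2)$-block with enough $d$-regular filler immediately gives a \emph{disconnected} witness, and the remaining task is the edge-swap connectivity argument, where one must check that rerouting a constant number of edges between components leaves the dense block's internal coloring still obstructed.
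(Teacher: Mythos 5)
Your route (a) is in fact exactly the paper's construction: a dense gadget $G'$ on $2n'$ vertices with $d>n'$, a $d$-regular filler $H$ on the remaining vertices, and a two-edge swap (delete an edge $(a,a')$ of $G'$ and an edge $(b,b')$ of $H$, add $(a,b)$ and $(a',b')$) to get a connected $d$-regular graph on $n$ vertices. But you stop at the decisive step: you flag as ``the hard part'' the claim that the swap leaves the dense block obstructed, and you imagine one must somehow force the ambient coloring's behavior on the block. That is the genuine gap, and the missing idea is that no forcing is needed at all. Two one-line observations close it. First, the restriction of any acyclic edge coloring of the glued graph to a subgraph is again proper and acyclic, so a $(d+1)$-coloring of the whole graph would induce a $(d+1)$-acyclic coloring of $G'-(a,a')$. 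Second --- this is precisely the Remark following Theorem~\ref{thm:thm1} in the paper --- the counting argument has slack: in any acyclic coloring, the color classes restricted to the $2n'$ vertices of $G'$ are matchings, at most one of which can have $n'$ edges (two perfect matchings would yield a bichromatic cycle), so $d+1$ colors cover at most $n'+d(n'-1)$ edges, while $G'-(a,a')$ still has $dn'-1$ edges; the obstruction therefore survives the deletion of one edge whenever $n'+d(n'-1)+1<dn'$, i.e.\ $d-n'\ge 2$. Your worry that the inequality is ``global'' and ``one cannot simply embed the dense gadget as a subgraph'' is misplaced: the count concerns only matchings on the gadget's own vertex set and transfers verbatim under restriction.

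This slack computation is also exactly where the hypothesis $d\ge 5$ enters, which your sketch never uses or explains. The paper's explicit gadgets are $G'=K_{d+1}$ for odd $d$, giving $n'=(d+1)/2$ and slack $d-n'=(d-1)/2\ge 2$ iff $d\ge 5$, and $G'=K_{d+2}$ minus a perfect matching for even $d$, giving $n'=(d+2)/2$ and slack $(d-2)/2\ge 2$ iff $d\ge 6$. These concrete choices also settle the existence and parity issues you wave at: when $d$ is odd, $dn$ even forces $n$ even, so the filler has an even number $n-(d+1)\ge d+2$ of vertices and a $d$-regular filler exists; when $d$ is even, any vertex count $n-(d+2)\ge d+1$ works. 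With the slack observation in hand, your plan compiles directly into the paper's proof; without it, the crux of Theorem~\ref{thm:thm2} --- why deleting and rerouting a single gadget edge cannot rescue a $(d+1)$-coloring --- remains unproven.
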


\begin{thm}
\label{thm:thm3}
$a'(K_{n,n}) \ge n+2= \Delta+2$, when n is odd.
\end{thm}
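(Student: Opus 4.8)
The plan is to assume, for contradiction, that $K_{n,n}$ admits an acyclic edge coloring using at most $n+1$ colors, and to extract a parity obstruction when $n$ is odd. Since every acyclic coloring is proper, at least $\chi'(K_{n,n})=\Delta=n$ colors are used. I would first dispose of the case of exactly $n$ colors: with $n$ colors each class is a perfect matching (a $1$-factor), and the union of any two distinct $1$-factors is a $2$-regular subgraph, hence a disjoint union of (even) cycles; this is a bichromatic cycle, contradicting acyclicity. So I may assume exactly $n+1$ colors occur. A deficiency count then pins down the structure: at each vertex exactly one of the $n+1$ colors is absent, so writing $b_i$ for the number of vertices of $A$ (equivalently of $B$) left uncovered by color class $M_i$, one gets $\sum_{i=1}^{n+1}b_i=n$. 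No two classes can both be perfect matchings (their union is again $2$-regular), so at most one $b_i$ is $0$; and $\sum b_i=n$ over $n+1$ classes forces at least one $b_i=0$. Hence exactly one class, say $F$, is a perfect matching and each of the remaining $n$ classes misses exactly one vertex $a_i\in A$ and one $b_i\in B$. I relabel so that color $i$ misses $a_i$ and $b_i$, and let $\mu$ be the permutation with $F=\{a_s b_{\mu(s)}\}$.

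Next I would extract the two key constraints. First, $\mu$ is a derangement: if $\mu(s)=s$, then in $M_s\cup F$ the edge $a_sb_s$ is an isolated $K_2$ while all other $2n-2$ vertices have degree $2$, forcing a cycle. Second, for each $i$ the union $M_i\cup F$ has exactly two degree-$1$ vertices ($a_i$ and $b_i$) and all others of degree $2$, so acyclicity makes it a single Hamiltonian path from $a_i$ to $b_i$; adding the absent edge $a_ib_i$ turns it into a Hamiltonian cycle. Encoding $M_i$ together with the artificial edge $a_ib_i$ as a permutation $\nu_i$ (which fixes $i$), this says precisely that $\nu_i^{-1}\mu$ is a single $n$-cycle for each $i$ — a perfect-$1$-factorization-type condition tying each near-perfect class to $F$. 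Finally, the fact that the classes partition all edges of $K_{n,n}$ (with the artificial edges supplying exactly the diagonal) is captured by the matrix identity
\[ \sum_{i=1}^{n} P_{\nu_i} + P_{\mu} = J + I, \]
where $P_\pi$ denotes the permutation matrix of $\pi$, $J$ the all-ones matrix, and $I$ the identity; one checks this entrywise, the off-diagonal cells receiving exactly one real edge and each diagonal cell receiving its real color plus one artificial edge.

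The crux — and the step I expect to be the main obstacle — is to combine these with the hypothesis that $n$ is odd. An $n$-cycle is an even permutation exactly when $n$ is odd, so the condition that each $\nu_i^{-1}\mu$ is an $n$-cycle forces $\mathrm{sgn}(\nu_i)=\mathrm{sgn}(\mu)$ for all $i$; thus all $n+1$ permutations in the displayed identity share a common sign. I would then play this common-sign information against the partition identity $\sum_i P_{\nu_i}+P_\mu=J+I$ and the derangement property of $\mu$ to produce a contradiction. The base case $n=3$ can be settled by hand: the common-sign constraint leaves only a handful of candidate $\nu_i$, and in each the $n$-cycle condition or the identity fails (for instance forcing $P_\mu=J-2I$, which is not a permutation matrix). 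The delicate point is to make this parity/counting argument run uniformly for all odd $n$ rather than merely recovering congruences that depend only on $J+I$ (such as $\det(J+I)=n+1$ or the permanent modulo $2$), which by themselves carry no information about the decomposition. I therefore expect to need the full $n$-cycle structure of the $\nu_i^{-1}\mu$, not just their signs, together with the tiling of the diagonal recorded above, which is where the real work of the proof lies.
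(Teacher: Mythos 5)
Your structural reduction is correct and coincides with the paper's: exactly one color class $F$ is a perfect matching (the paper's $\alpha$), every other class misses exactly one vertex per side, each $M_i\cup F$ is a Hamiltonian path from $a_i$ to $b_i$, and $\mu$ is a derangement. But the proof has a genuine gap exactly where you flag it, and the gap is fatal rather than technical: the common-sign conclusion $\mathrm{sgn}(\nu_i)=\mathrm{sgn}(\mu)$ carries no contradiction, because when $n$ is odd an $n$-cycle is an \emph{even} permutation, so the sign data you extract is internally consistent, and the identity $\sum_{i}P_{\nu_i}+P_{\mu}=J+I$ is satisfied by plenty of sign-coherent families of permutations. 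All of your constraints couple each near-perfect class to $F$ individually; nothing in your setup couples two near-perfect classes to \emph{each other}, and that coupling is precisely the missing ingredient. Deferring it to ``the full $n$-cycle structure together with the tiling of the diagonal'' is an acknowledgment that the theorem is not yet proved; the $n=3$ hand check does not generalize.

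The paper closes this gap with two ideas you don't have. First, a specific choice of a \emph{pair} of near-perfect classes: take $\beta$ arbitrary with missing vertices $a_1,b_1$, and let $\gamma$ be the color of the actual edge $(a_1,b_1)$ (one checks $\gamma\neq\alpha$, essentially your derangement observation). Since the union of two near-perfect classes has $2n-2$ edges and no cycles, it is exactly two paths, and with this choice one path is the single edge $(a_1,b_1)$ while the other runs from $a_2$ to $b_2$, the vertices missing $\gamma$. Second, an extension to $K_{n+1,n+1}$: add $a_{n+1},b_{n+1}$, color each $(a_{n+1},u)$ with $m(u)$, each $(b_{n+1},v)$ with $m(v)$, and $(a_{n+1},b_{n+1})$ with $\alpha$. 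In the extended graph the three classes $\alpha,\beta,\gamma$ become perfect matchings whose pairwise unions are Hamiltonian cycles --- and now the parity argument runs the right way around: $n+1$ is even, so each pairwise quotient $\pi_i^{-1}\pi_j$ is an $(n+1)$-cycle of sign $-1$; normalizing $\pi_\alpha=\mathrm{id}$ gives $\mathrm{sgn}(\pi_\beta)=\mathrm{sgn}(\pi_\gamma)=-1$, hence $\mathrm{sgn}(\pi_\beta^{-1}\pi_\gamma)=+1$, contradicting that it too is an $(n+1)$-cycle. In short, the parity obstruction lives in the even order $n+1$ via a \emph{triple} of pairwise-Hamiltonian matchings, not in the odd order $n$ via pairs $(\nu_i,\mu)$, which is why your approach stalls where it does.
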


\noindent \textbf{Remarks: }
\begin{enumerate}

\item It is interesting to compare the statement of Theorem 1 to the result of \cite {NesWorm},
namely that almost all $d$-regular graphs for a fixed $d$, require only $d+1$ colors to be
acyclically edge colored. 
 From the introduction of \cite{NesWorm}, it appears that the authors expect their result for random 
$d$-regular graphs would extend to all d-regular graphs except for $K_n$, n even. From Theorem \ref{thm:thm1} and Theorem  \ref{thm:thm2}  it is clear that this is not true: There exists a large number of 
  $d$-regular graphs which require $d+2$ colors to be acyclically adge colored, even $d$ is fixed.

\item The complete bipartite graph, $K_{n+2,n+2}$ is said to have a perfect 1-factorization if the edges of $K_{n+2,n+2}$ can be decomposed into $n+2$ disjoint perfect matchings such that the union of any two perfect matchings forms a hamiltonian cycle. It is obvious from Lemma \ref{lem:lem1} that $K_{n+2,n+2}$ does not have perfect 1-factorization when $n$ is even. When $n$ is odd, some families have been proved to have perfect 1-factorization (see \cite{BMW} for further details). It is easy to see that if $K_{n+2,n+2}$ has a perfect 1-factorization then $K_{n+2,n+1}$ and therefore $K_{n+1,n+1}$ has a acyclic edge coloring using $n+2$ colors. Therefore the statement of Theorem \ref{thm:thm3} cannot be extended to the case when $n$ is even in general.
\item Clearly if $K_{n+2,n+2}$ has a perfect 1-factorization, then $a'(K_{n,n})=n+2$. It is known that (see \cite{BMW}), if $n+2 \in \{p,2p-1,p^2\}$, where $p$ is an odd prime or when $n+2 < 50$ and odd, then $K_{n+2,n+2}$ has a perfect 1-factorization. Thus the lower bound in Theorem \ref{thm:thm3} is tight for the above mentioned values of $n+2$.
\end{enumerate}


\noindent \textbf{Proof of Theorem 1:}
\begin{proof}
Observe that two different color classes cannot have $n$ edges each, since that will lead to a bichromatic cycle. Therefore at most one color class can have $n$ edges while all other color classes can have at most $n -1$ edges. Thus the number of edges in the union of $\Delta(G)+1 = d+1$ color classes is at most $n+d(n-1) < dn$, when $d >n$ (Note that dn is the total number of edges in $G$). Thus $G$ needs at least one more color. Thus $a'(G) \ge d+2 = \Delta(G)+2$.
\end{proof}

\noindent {\bf Remark:} It is clear from the proof
 that if $ n + d(n-1) + x < dn$ then even after removing $x$ edges
 from the given graph, the resulting graph still would require $d+2$ colors to be acyclically
edge colored.  

\noindent \textbf{Proof of Theorem 2:}
\begin{proof} 

If $d$ is odd, let $G' = K_{d+1}$. Else if $d$ is even let $G'$ be the complement of a perfect
matching on $d+2$ vertices. 
Let $H$ be any $d$-regular graph on $N = n-n'$ vertices. Now remove an edge $(a,a')$ from $G'$ and an edge $(b,b')$ from $H$. Now connect $a$ to $b$ and $a'$ to $b'$ to create a $d$-regular graph $G$. 
Clearly $G$  requires $d+2$ colors to be acyclically edge colored since otherwise it would mean 
that $G'-\{(a,a')\}$ is $d+1$ colorable, a contradiction in view of the Remark  following Theorem 1,
for $d \ge 5$.
\end{proof}

Complete bipartite graphs offer a interesting case since they have $d = n$. Observe that the above counting argument fails. We deal with this case in the next section.

\subsection*{Complete Bipartite Graphs}
\begin{lem}
\label{lem:lem1}
If $n$ is even, then $K_{n,n}$ does not contain three disjoint perfect matchings $M_1$, $M_2$, $M_3$ such that $M_i \cup M_j$ forms a hamiltonian cycle for $i,j \in \{1,2,3\}$ and $i \neq j$.
\end{lem}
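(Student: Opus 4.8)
The plan is to encode perfect matchings of $K_{n,n}$ as permutations and reduce the three Hamiltonicity conditions to a single parity statement about signs of permutations. Label the vertices of $A$ as $a_1,\dots,a_n$ and those of $B$ as $b_1,\dots,b_n$. Every perfect matching $M$ then corresponds to a permutation $\pi \in S_n$ via the rule $a_i \sim b_{\pi(i)}$. After relabeling $B$, I would assume without loss of generality that $M_1$ is the identity matching $a_i \sim b_i$, and write $\alpha,\beta \in S_n$ for the permutations corresponding to $M_2$ and $M_3$ respectively.

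First I would record how the cycle structure of each union $M_i \cup M_j$ is read off from these permutations. Tracing the alternating walk $a_i \to b_i \to a_{\alpha^{-1}(i)} \to \cdots$ shows that $M_1 \cup M_2$ is a single cycle (i.e. Hamiltonian) precisely when $\alpha$ is an $n$-cycle, and likewise $M_1 \cup M_3$ is Hamiltonian iff $\beta$ is an $n$-cycle. For the remaining union, the walk $a_i \to b_{\alpha(i)} \to a_{\beta^{-1}\alpha(i)} \to \cdots$ shows that $M_2 \cup M_3$ is Hamiltonian iff $\beta^{-1}\alpha$ is an $n$-cycle. I would also note that the pairwise disjointness of $M_1,M_2,M_3$ is automatic once these single-cycle conditions hold: an $n$-cycle with $n \ge 2$ has no fixed point, so $\alpha,\beta$, and $\beta^{-1}\alpha$ are all derangements, which is exactly what disjointness requires.

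The finishing step is a sign computation. An $n$-cycle is a product of $n-1$ transpositions, so its sign is $(-1)^{n-1}$; when $n$ is even this equals $-1$, so every $n$-cycle is an \emph{odd} permutation. Hence $\mathrm{sgn}(\alpha)=\mathrm{sgn}(\beta)=-1$, which forces $\mathrm{sgn}(\beta^{-1}\alpha)=\mathrm{sgn}(\beta^{-1})\,\mathrm{sgn}(\alpha)=\mathrm{sgn}(\beta)\,\mathrm{sgn}(\alpha)=(-1)(-1)=+1$. But Hamiltonicity of $M_2 \cup M_3$ requires $\beta^{-1}\alpha$ to be an $n$-cycle, hence of sign $-1$, a contradiction. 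Therefore no such triple $M_1,M_2,M_3$ can exist when $n$ is even.

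I expect the only delicate point to be the bookkeeping in the matching-to-permutation dictionary: verifying that the cycle type of each union $M_i \cup M_j$ is governed by the correct permutation (in particular that it is $\beta^{-1}\alpha$, and not $\alpha\beta^{-1}$ or some other variant). An off-by-an-inverse slip here would not affect the conclusion, since $\mathrm{sgn}$ is invariant under inversion, but it could obscure the argument, so I would state the correspondence carefully. The parity argument itself is the conceptual heart and is very short once this dictionary is pinned down.
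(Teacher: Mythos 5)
Your proposal is correct and follows essentially the same route as the paper's own proof: encode each matching as a permutation, normalize $M_1$ to the identity, observe that Hamiltonicity of each union forces the corresponding permutation ($\alpha$, $\beta$, and $\beta^{-1}\alpha$) to be a single $n$-cycle of sign $-1$ when $n$ is even, and derive the contradiction $\mathrm{sgn}(\beta^{-1}\alpha)=+1$. Your added remark that disjointness is automatic from the derangement property is a small bonus not in the paper, but the core argument is identical.
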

\begin{proof}
Observe that a perfect matching of $K_{n,n}$ corresponds to a permutation of $\{1,2,\ldots,n\}$. Let perfect matching $M_i$ corresponds to permutation $\pi_i$. Without loss of generality, we can assume that $\pi_1$ is the identity permutation by renumbering the vertices of one side of $K_{n,n}$.

Suppose $K_{n,n}$ contains three perfect matchings $M_1$, $M_2$, $M_3$ such that $M_i \cup M_j$ forms a hamiltonian cycle for $i,j \in \{1,2,3\}$ and $i \neq j$.

Now we study the permutation $\pi_i^{-1}\pi_j$. Since $M_i \cup M_j$ induces a hamiltonian cycle in $K_{n,n}$, it is easy to see that the smallest $t \ge 1$ such that $(\pi_i^{-1}\pi_j)^t(1)=1$ equals $n$. It follows that, in the cycle structure of $\pi_i^{-1}\pi_j$, there exists exactly one cycle and this cycle is of length $n$. The sign of a permutation is defined as: $sign(\pi) = (-1)^k$ , where $k$ is the number of even cycles in the cycle structure of the permutation $\pi$. Recalling that $n$ is even, we have the following claim:

\begin{clm}
\label{clm:clm1}
$sign(\pi_i^{-1}\pi_j)= -1$ for $i,j \in \{1,2,3\}$ and $i \neq j$.
\end{clm}

Now with respect to $\pi_i^{-1}\pi_j$, taking $\pi_i=\pi_1$ (the identity permutation) and $\pi_j=\pi_2$ (or $\pi_3$), we infer that $sign(\pi_2)=-1$ and $sign(\pi_3)=-1$. Now $sign(\pi_2^{-1}\pi_3) = sign(\pi_2^{-1})sign(\pi_3)$ = (-1)(-1) = 1, a contradiction in view of $Claim$ \ref{clm:clm1}.
\end{proof}

~~~~~~~

\noindent \textbf{Proof of Theorem 3:}
\begin{proof}
Since $K_{n,n}$ is a regular graph, $a'(K_{n,n}) \ge \Delta +1 = n+1$. Suppose $n+1$ colors are sufficient. This can be achieved only in the following way: One color class contains $n$ edges and the remaining color classes contain $n-1$ edges each. Let $\alpha$ be the color class that has $n$ edges. Thus color $\alpha$ is present at every vertex on each side $A$ and $B$. Any other color is missing at exactly one vertex on each side.

\begin{obs}
\label{obs:obs1}
Let $\theta \neq \alpha$ be a color class. The subgraph induced by color classes $\theta$ and $\alpha$ contains $2n-1$ edges and since there are no bichromatic cycles, the subgraph induced is a hamiltonian path. We call this an $(\alpha,\theta)$ hamiltonian path.
\end{obs}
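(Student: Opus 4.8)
The plan is to analyze the subgraph $H$ of $K_{n,n}$ induced by the two color classes $\alpha$ and $\theta$, and to pin down its structure from three ingredients: a degree bound coming from properness, acyclicity of the coloring, and a careful edge count. First I would record the edge count. Under the standing contradiction hypothesis, $\alpha$ is the unique color class with $n$ edges and every other class, in particular $\theta$, has $n-1$ edges; hence $H$ has exactly $n + (n-1) = 2n-1$ edges, distributed over the full vertex set $V = A \cup B$ of size $2n$. This already establishes the first assertion of the statement.

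Next I would bound the maximum degree of $H$. Because the coloring is proper, at most one $\alpha$-edge and at most one $\theta$-edge are incident to any given vertex, so every vertex has degree at most $2$ in $H$. Combining this with the hypothesis that the coloring is acyclic — so $H$ contains no bichromatic cycle — forces each connected component of $H$ to be a simple path. Thus $H$ is a linear forest (a disjoint union of paths).

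Then I would show $H$ has no isolated vertices. Since $\alpha$ has $n$ edges on the $2n$-vertex graph $K_{n,n}$ and is present at every vertex of $A$ and of $B$, it is a perfect matching; consequently each of the $2n$ vertices is covered by an $\alpha$-edge and so has degree at least $1$ in $H$. (As a consistency check, the two vertices at which $\theta$ is missing are exactly the two vertices of degree $1$, which will be the path's endpoints.)

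Finally I would combine these facts with the edge count to fix the number of components. A forest on $2n$ vertices with exactly $2n-1$ edges has precisely one component, so $H$ is a single tree; being a tree with maximum degree $2$ and no isolated vertices, it is a single path through all $2n$ vertices, i.e., a hamiltonian path of $K_{n,n}$, which is what we call the $(\alpha,\theta)$ hamiltonian path. The only step needing care is this last count: the degree and acyclicity conditions alone yield merely a linear forest, and one must invoke both the exact value $2n-1$ of the edge count and the absence of isolated vertices to rule out a decomposition into several shorter paths and conclude that the single path is spanning.
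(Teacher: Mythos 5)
Your proof is correct and follows essentially the same route the paper takes: the paper compresses the argument into one line (the union of the two classes has $n+(n-1)=2n-1$ edges, has maximum degree $2$ by properness, and is acyclic, hence is a spanning path), and you simply spell out those standard details. One small remark: your ``no isolated vertices'' step is redundant, since a forest on $2n$ vertices with $2n-1$ edges already has exactly $2n-(2n-1)=1$ component, so connectivity -- and hence the absence of isolated vertices -- comes for free from the edge count.
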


\begin{obs}
\label{obs:obs2}
Let $\theta_1$ and $\theta_2$ be color classes with $n-1$ edges each. The subgraph induced by color classes $\theta_1$ and $\theta_2$ contains $2n-2$ edges. Since there are no bichromatic cycles, the subgraph induced consists of exactly two paths.
\end{obs}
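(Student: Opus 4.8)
The plan is to show that the spanning subgraph $H = (V, \theta_1 \cup \theta_2)$ is a linear forest with exactly two connected components, and that both components are genuine (nontrivial) paths. The argument has three ingredients: the local structure of the two matchings, a forest component count, and a global argument that no component degenerates to an isolated vertex.

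First I would record the structural facts about a single color class. Since the edge coloring is proper, each $\theta_i$ is a matching; as it has $n-1$ edges in $K_{n,n}$, it saturates all but one vertex of $A$ and all but one vertex of $B$. Write $a_i \in A$ and $b_i \in B$ for the vertices missed by $\theta_i$. In $H$ every vertex lies in at most one edge of each matching, so $\Delta(H) \le 2$; moreover any cycle in a union of two matchings must alternate between them, hence uses only the two colors $\theta_1, \theta_2$ and would be a bichromatic cycle. Since the coloring is acyclic, $H$ has no cycles, so $H$ is a disjoint union of paths.

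Second, I would count components with the forest identity: for a forest the number of connected components equals the number of vertices minus the number of edges. Viewing $H$ as a spanning subgraph of $K_{n,n}$, this gives $2n - (2n-2) = 2$, so $H$ consists of exactly two components, each necessarily a path. The only remaining subtlety, and the step I view as the real obstacle, is to rule out the possibility that one of these components is a lone isolated vertex, which would leave just a single nontrivial path. Here I would invoke the global structure: each vertex of $K_{n,n}$ has degree $n$ and the coloring uses $n+1$ colors, so every vertex misses exactly one color; since $\alpha$ appears at every vertex, each vertex misses exactly one of the $n$ colors different from $\alpha$. As each such color misses exactly one vertex per side, the assignment sending a color to its missed vertex is a bijection on each side, whence $a_1 \neq a_2$ and $b_1 \neq b_2$. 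Consequently $a_1$ is saturated by $\theta_2$, $a_2$ by $\theta_1$, and likewise on side $B$, so no vertex is isolated in $H$. The four vertices $a_1, a_2, b_1, b_2$ are then exactly the degree-one vertices and serve as the four endpoints of the two paths.

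In summary, the bulk of the proof is the routine observation that a union of two matchings is a linear forest once bichromatic cycles are forbidden, together with the one-line forest count giving two components. The hard part is the last point: without knowing that $\theta_1$ and $\theta_2$ miss \emph{distinct} vertices on each side, one could a priori obtain a single long path plus an isolated vertex, so the clean conclusion of exactly two paths rests on the bijection argument between non-$\alpha$ colors and vertices of each side.
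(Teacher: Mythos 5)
Your proof is correct and follows essentially the same route as the paper: the union of two matchings is a linear forest once bichromatic cycles are excluded, and the count $2n-(2n-2)=2$ gives exactly two components. Your extra step ruling out a trivial (isolated-vertex) component via the bijection between non-$\alpha$ colors and missed vertices is a sound refinement of what the paper leaves implicit --- it is exactly the fact the paper invokes later when asserting $a_1 \neq a_2$ and $b_1 \neq b_2$ --- so you have supplied a detail the paper defers rather than a different argument.
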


Note that there is a unique color missing at each vertex on each side of $K_{n,n}$. Let $m(u)$ be the color missing at vertex $u$. For $a_1 \in A$ and $b_1 \in B$, let $m(a_1)=m(b_1)= \beta$. Let the color of the edge $(a_1,b_1) =\gamma$. Clearly $\gamma \neq \alpha$ since otherwise there cannot be a $(\alpha,\beta)$ hamiltonian path, a contradiction to $Observation$ \ref{obs:obs1}. For $a_2 \in A$ and $b_2 \in B$, let $m(a_2)=m(b_2)= \gamma$. Its clear that $a_1 \neq a_2$ and $b_1 \neq b_2$. Consider the subgraph induced by the colors $\beta$ and $\gamma$. In view of $Observation$ \ref{obs:obs2} it consists of exactly two paths. One of them is the single edge $(a_1,b_1)$. The other path has length $2n-3$ and has $a_2$ and $b_2$ as end points.

Now we construct a $K_{n+1,n+1}$ from the above $K_{n,n}$ by adding a new vertex, $a_{n+1}$ to side $A$ and a new vertex, $b_{n+1}$ to side $B$. Now for $u \in B$ color each edge $(a_{n+1},u)$ by the color $m(u)$ and for $v \in A$ color each edge $(b_{n+1},v)$ by the color $m(v)$. Assign the color $\alpha$ to the edge $(a_{n+1},b_{n+1})$. Clearly the coloring thus obtained is a proper coloring.

Now we know that there existed a $(\alpha,\beta)$ hamiltonian path in $K_{n,n}$ with $a_1$ and $b_1$ as end points. Recalling that $m(a_1)=m(b_1)=\beta$, we have $color(a_{n+1},b_1) = color(b_{n+1},a_1)= \beta$. It is easy to see that in $K_{n+1,n+1}$ this path along with the edges $(a_1,b_{n+1})$, $(b_{n+1},a_{n+1})$ and $(a_{n+1},b_1)$ forms a $(\alpha,\beta)$ hamiltonian cycle. In a similar way, for $(\alpha,\gamma)$ hamiltonian path that existed in $K_{n,n}$, we can see that in $K_{n+1,n+1}$, we have a corresponding $(\alpha,\gamma)$ hamiltonian cycle.

Recall that there was a $(\beta,\gamma)$ bichromatic path starting from $a_2$ and ending at $b_2$ in $K_{n,n}$. In the $K_{n+1,n+1}$ we created, we have $c(a_2,a_{n+1})=\gamma$ , $c(a_1,b_{n+1})=\beta$ , $c(a_{n+1},b_1)=\beta$ and $c(a_{n+1},b_2)=\gamma$. Thus the above $(\beta,\gamma)$ bichromatic path in $K_{n,n}$ along with the edges $(a_2,b_{n+1})$, $(b_{n+1},a_1)$, $(a_1,b_1)$, $(b_1,a_{n+1})$, $(a_{n+1},b_2)$ in that order. Thus we have 3 perfect matchings induced by the color classes $\alpha$, $\beta$ and $\gamma$ whose pairwise union gives rise to hamiltonian cycles in $K_{n+1,n+1}$, a contradiction to $Lemma$ \ref{lem:lem1} since $n+1$ is even.
\end{proof}

\end{document}